\title{From Ideal Membership Problem for polynomial rings to Dehn Functions of Metabelian Groups}
\author{Wenhao Wang}
\address{Department of Mathematical Logic\\
  The Steklov Mathematical Institute of Russian Academy of Science\\
 Moscow, Russia 119991}
\email[W.~Wang]{wenhaowang@mi-ras.ru}
\newtheorem{theorem}{Theorem}[section]
\newtheorem{corollary}[theorem]{Corollary}
\newtheorem*{imp}{Ideal Membership Problem}
\newtheorem*{rp}{Representation Problem}
\newenvironment{customthm}[1]
  {\innercustomthm}
  {\endinnercustomthm}
\theoremstyle{definition}
\newtheorem{proposition}[theorem]{Proposition}
\newtheorem{definition}[theorem]{Definition}
\newtheorem{prob}[theorem]{Problem}
\theoremstyle{remark}
\newtheorem*{rems}{Remark}
\newcommand{\llangle}{\langle \langle}
\newcommand{\rrangle}{\rangle \rangle}
\DeclareMathOperator{\area}{Area}
\DeclareMathOperator{\supp}{supp}
\begin{document}

\maketitle

\begin{abstract}
The ideal membership problem asks whether an element in the ring belongs to the given ideal. In this paper, we propose a function that reflecting the complexity of the ideal membership problem in the ring of Laurent polynomials with integer coefficients. We also connect the complexity function we define to the Dehn function of a metabelian group, in the hope of constructing a metabelian group with superexponential Dehn function.
\end{abstract}

\section{Introduction}

Let $R$ be a commutative ring with unity. Let $I$ be an ideal of $R$ along with a generating set $F=\{f_1,f_2,\dots,f_s\}$. The following problems are well known. 
\begin{imp}
	Let $I$ be an ideal of the ring $R$. Decide whether a given element $g\in R$ belongs to $I$.
\end{imp}

\begin{rp}
		Let $I$ be a finitely generated ideal of the ring $R$ and $F=\{f_1,f_2,\dots,f_s\}$ be a generating set of $I$. Given an element $g\in I$, find $h_i\in R$ such that
	\[g=h_1 f_1+h_2 f_2+\dots+f_s h_s.\]

\end{rp}

In this paper, we fix $R$ to be the polynomial ring or the ring of Laurent polynomials over $\mathbb Z$, i.e., $R=\mathbb Z[x_1,x_2,\dots,x_n]$ or $\mathbb Z[x_1^{\pm 1},x_2^{\pm 1},\dots,x_k^{\pm 1}]$. Both problems are decidable in these cases (see in \cite{mayr1997some} or \cite{mayr2017complexity}). Our goal is to establish a function reflecting the computational complexity of these two problems. Note that since $R$ is Noetherian, every ideal is finitely generated.

For $d=(d_1,d_2,\dots,d_k)\in \mathbb Z^k$ or $\mathbb Z_{\geqslant 0}^k$ we will denote the term $x_1^{d_1}x_2^{d_2} \dots x_k^{d_k}$ by $x^d$. Additionally, we equip $\mathbb Z^k$ with the norm $\|d\|:=|d_1|+|d_2|+\dots+|d_k|$. An element $g\in R$ can be uniquely written as 
$g=\sum_{i=1}^l \alpha_{d_i} x^{d_i}, \alpha_{d_i}\in \mathbb Z, d_i\in \mathbb Z^k$ or $\mathbb Z_{\geqslant 0}^k.$ We then define the norm and degree of $g$ as $|g|=\sum_{i=1}^l |\alpha_i|,\deg g=\max_{i} \|d_i\|$ respectively.

We define a function that gives the minimal number of generators in $F$ needed to express an element $g\in I.$ Analogously to the case of groups, we call it the area of $g$ with respect to $I$ and $F$.
\[\area_{I,F}(g)=\min \{\sum_{i=1}^s |h_i| \mid g=h_1 f_1+h_2f_2+\dots+h_sf_s, h_i\in R\}.\]

We then define the \emph{complexity function} of $A$ with respect to $F$ to be 
\[C_{I,F}(m,n):=\sup\{\area_F(g) \mid |g|\leqslant m, \deg g \leqslant n\}.\]


The first result is the following:
\begin{customthm}{A}[\cref{degreeBound}]
	\label{degreeBoundIntro}
	Let $A$ be an ideal in the polynomial ring $R=\mathbb Z[x_1,x_2,\dots,x_k]$ or $\mathbb Z[x_1^{\pm 1},x_2^{\pm 1},\dots,x_k^{\pm 1}]$ generated by $F=\{f_1,f_2,\dots,f_s\}$. Let $g\in A$ and $h_i\in R$ such that 
	\[g=h_1 f_1+h_2 f_2+\dots +h_s f_s,\  \area_{I,F}(g)=\sum_{i=1}^s |h_i|.\]
	Then there exists a constant $C>0$ depending solely on $F$, such that
	\[\deg h_i \leqslant \deg g + C\area_{I,F}(g).\]
\end{customthm}

In the literature, most results concerning the ideal membership problem for the ring of polynomials estimate the degree of the coefficients \cite{hermann1926PolynomialIdeal}, \cite{mayr1982complexity}. \cref{degreeBoundIntro} provides a mathod to translate the estimation of the degree to an estimation of the area, and hence to the complexity function.  

The complexity function of an ideal we propose is an analog of the Dehn function of a finitely presented group. Given a finitely presented group $G$ along with a finite presentation $\langle X \mid R\rangle$. The Dehn function $\delta_G(n)$ describes how many relations in $R$ needed to represented a word that is trivial in $G$ and of length at most $n$ in $X$ (formal definition of Dehn function will be given in Section 3). It is well-known that the word problem of a finitely presented group is decidable if and only if its Dehn function is computable \cite{MadlenerOtto1985}.  

In \cite{wang2020dehn}, it is shown that Dehn functions of finitely presented metabelian groups share a universal upper bound $2^{f(n)}$, where $f(n)$ is any superpolynomial function. The answer to the following question remains unknown.

\begin{prob}
\label{question01}
	Is the (relative) Dehn function of any finitely presented (generated) metabelian group bounded above by the exponential function?
\end{prob}

All known examples to this date are bounded above by the exponential function. The next result connects the complexity function of an ideal to the Dehn function of a finitely presented metabelian group, providing a potential method that translates a complicated ideal membership problem into a large Dehn function of a finitely presented metabelian group.

\begin{customthm}{B}[\cref{relationship}, \cref{finitelyPresented}]
	\label{relationshipIntro}
	Let $I$ be an ideal in the ring $\mathbb Z[x_1^{\pm 1},x_2^{\pm 1},\dots,x_k^{\pm 1}]$. Then there exists a finitely generated metabelian group $G_I$ with a presentation $\mathcal P$ and a constant $C>0$ such that
	\[\tilde \delta_{G_I,\mathcal P}(C\cdot mn) \geqslant C_I(m,n),\]
	where $\tilde \delta_G$ be the Dehn function relative to the variety of metabelian group. In particular, if $I$ is tame, then there exists a finite presentation $\mathcal P'$ of $G_I$ and a constant $C'>0$ such that 
	\[C'\cdot \delta_{G_I,\mathcal P'}(C' mn)+C'mn+C'\geqslant  C_I(m,n).\]
\end{customthm}

The definitions of the relative Dehn function and tameness of an ideal will be given in Section 3.

\emph{Acknowledgements.} The author acknowledges Igor Lysenok for many valuable discussions on this subject. The work was performed at the Steklov International Mathematical Center and supported by the Ministry of Science and Higher Education of the Russian Federation (agreement no. 075-15-2022-265).

\section{The Complexity Function of an Ideal}

As in previous section, we fix $R$ to be $R=\mathbb Z[x_1,x_2,\dots,x_k]$ or $\mathbb Z[x_1^{\pm 1},x_2^{\pm 1},\dots,x_k^{\pm 1}]$. Let $I$ be an ideal in $R$ generated by $f_1,f_2,\dots,f_s.$

We define the equivalence relation on functions $f,g: \mathbb N\times \mathbb N \to \mathbb N$ as follows
\[f(m,n) \asymp g(m,n) \iff \exists C\in \mathbb N, C^{-1}f(m,n)\leqslant g(m,n)\leqslant Cf(m,n).\]

\begin{proposition}
	The complexity function is unique with respect to any finite generating set up to the equivalence relation $\asymp$.
\end{proposition}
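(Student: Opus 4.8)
The plan is to show that any two finite generating sets $F=\{f_1,\dots,f_s\}$ and $F'=\{f'_1,\dots,f'_t\}$ of $I$ give $\asymp$-equivalent complexity functions, which is exactly the assertion. The one elementary fact the whole argument rests on is that the norm $|\cdot|$ on $R$ is submultiplicative, i.e. $|pq|\leqslant |p|\,|q|$ for all $p,q\in R$: writing the coefficient of each monomial in $pq$ as a sum of products of coefficients of $p$ and $q$ and applying the triangle inequality gives this at once. I expect this to be the only genuinely computational ingredient, and it is routine.

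First I would record how the two generating sets convert into one another. Since $F$ and $F'$ generate the same ideal, fix $a_{ij},b_{ji}\in R$ with $f'_j=\sum_{i=1}^s a_{ij}f_i$ and $f_i=\sum_{j=1}^t b_{ji}f'_j$, and set $C_1:=\max_j\sum_i|a_{ij}|$, $C_2:=\max_i\sum_j|b_{ji}|$, and $C:=\max(C_1,C_2)$. These constants depend only on $F$ and $F'$, not on any element $g$.

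Next, take an arbitrary $g\in I$ and an optimal expression $g=\sum_j h'_j f'_j$ with $\sum_j|h'_j|=\area_{I,F'}(g)$. Substituting the relations $f'_j=\sum_i a_{ij}f_i$ rewrites this as $g=\sum_i\bigl(\sum_j h'_j a_{ij}\bigr)f_i$, an expression over $F$. By the triangle inequality and submultiplicativity, $\area_{I,F}(g)\leqslant\sum_i\bigl|\sum_j h'_j a_{ij}\bigr|\leqslant\sum_j|h'_j|\sum_i|a_{ij}|\leqslant C_1\,\area_{I,F'}(g)$, and symmetrically $\area_{I,F'}(g)\leqslant C_2\,\area_{I,F}(g)$.

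Finally I would pass to the complexity functions. Since the substitution leaves $g$ (hence $|g|$ and $\deg g$) unchanged, taking the supremum over all $g$ with $|g|\leqslant m$ and $\deg g\leqslant n$ in the two inequalities above yields $C_{I,F}(m,n)\leqslant C_1\,C_{I,F'}(m,n)$ and $C_{I,F'}(m,n)\leqslant C_2\,C_{I,F}(m,n)$, hence $C^{-1}C_{I,F}(m,n)\leqslant C_{I,F'}(m,n)\leqslant C\,C_{I,F}(m,n)$, i.e. $C_{I,F}\asymp C_{I,F'}$ (with the understanding that the two sides are simultaneously finite or infinite). There is no real obstacle; the only points requiring a little care are keeping the direction of the inequalities straight and checking that the constants depend on the generating sets alone, which the argument above does.
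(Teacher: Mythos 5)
Your proposal is correct and follows essentially the same route as the paper: express one generating set in terms of the other, substitute into an optimal representation, and use submultiplicativity of the norm $|\cdot|$ to bound the new coefficients, with constants depending only on the two generating sets. The only difference is cosmetic: you carry out both directions explicitly and track the constants slightly more sharply ($\max_j\sum_i|a_{ij}|$ versus the paper's $l\cdot\max_{i,j}|r_{i,j}|$), whereas the paper does one direction and appeals to symmetry.
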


\begin{proof}
	Let $F'=\{g_1,g_2,\dots,g_l\}$ be another generating set of $I$. Then every $f_i$ can be decomposed to a linear cobination of $g_1,g_2,\dots,g_l$ with coefficients in $R$:
	\[f_i=\sum_{j=1}^l r_{i,j}g_j.\]
	For an arbitrary $g\in A$, there exists $h_1,h_2,\dots h_s$ such that 
	\[g=h_1 f_1 + h_2 f_2 +\dots+h_s f_s, \text{ and } \area_{I,F}=\sum_{i=1}^s |h_i|.\]
	Then we have
	\[g=\sum_{i=1}^s \left( h_i\sum_{j=1}^{l}  r_{i,j}g_j\right)=\sum_{j=1}^l \left( g_j \sum_{i=1}^s h_i r_{i,j} \right).\]
	Let $C=\max_{i,j} |r_{i,j}|$. It follows that 
	\[\area_{I,F'}(g)\leqslant \sum_{j=1}^l \left|\sum_{i=1}^s h_i r_{i,j}\right|\leqslant \sum_{j=1}^l C\sum_{i=1}^s |h_{i}|\leqslant lC\area_{I,F}(g).\]
	Therefore, the proposition follows.
\end{proof}

Thus, there is no ambiguity in speaking about the complexity function of the ideal $I$, and we will denote it by $C_I(m,n)$.

With the help of the classic Gr\"{o}bner basis method to solve the ideal membership problem for $I$, it is easy to check that 

\begin{proposition}
	Let $I$ be an ideal in the ring $R$. 
	If $R=\mathbb Z[x_1,x_2,\dots,x_k]$, we have that 
	\[C_I(m,n)\leqslant C m^{n^{k}},\]
	for some constant $C>0$.
	If $R=\mathbb Z[x_1^{\pm 1},x_2^{\pm 1},\dots,x_k^{\pm 1}]$, we have that 
	\[C_I(m,n)\leqslant D m^{n^{2k}},\]
	for some constant $D>0$.
\end{proposition}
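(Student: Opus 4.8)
The plan is to run the textbook Gröbner basis procedure for the ideal membership problem of $I$ and to bookkeep the two quantities that control $\area$: the number of reduction steps needed to bring $g$ to $0$, and the growth of the integer coefficients along the way. I would fix a \emph{degree-compatible} monomial order (say degree-lexicographic), so that a reduction step never raises the total degree, and --- since by the previous proposition $\area$ is independent of the generating set up to $\asymp$ --- I would work with a Gröbner basis of $I$ over the PID $\mathbb Z$, which is finite because $R$ is Noetherian.

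\emph{The polynomial case.} Fix a Gröbner basis $G=\{g_1,\dots,g_t\}$ of $I\subseteq\mathbb Z[x_1,\dots,x_k]$ as above, and let $g\in I$ with $|g|\leqslant m$, $\deg g\leqslant n$. Reduce $g$ modulo $G$ one leading term at a time: if $p$ is the current polynomial, then $\deg p\leqslant n$ by degree-compatibility, $\operatorname{lt}(p)\in\operatorname{lt}(I)$, so the set $S=\{i:\operatorname{lm}(g_i)\mid\operatorname{lm}(p)\}$ is nonempty and $\operatorname{lc}(p)$ lies in the ideal of $\mathbb Z$ generated by $\{\operatorname{lc}(g_i):i\in S\}$; a B\'ezout combination of those $g_i$ cancels $\operatorname{lt}(p)$ and replaces $p$ by a polynomial of degree $\leqslant n$ whose coefficients have grown in absolute value by at most a fixed factor $\kappa=\kappa(G)$. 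Since $\operatorname{lm}(p)$ strictly decreases and there are only $\binom{n+k}{k}\leqslant(n+1)^k$ monomials of degree $\leqslant n$ in $k$ variables, this terminates in at most $(n+1)^k$ steps, and since $g\in I$ it terminates at $0$. The outcome is an expression $g=\sum_{i=1}^t q_ig_i$ with $\deg q_i\leqslant n$, and tracking coefficients gives $|p|_\infty\leqslant m\,\kappa^{(n+1)^k}$ at every stage, hence $|q_i|\leqslant(n+1)^k\,m\,\kappa^{(n+1)^k}$ and
\[
\area_{I,G}(g)\;\leqslant\;t\,(n+1)^k\,m\,\kappa^{(n+1)^k}.
\]
Using $(n+1)^k\leqslant 2^kn^k$, this is $\leqslant(Cm)^{n^k}$ for a constant $C=C(G)$, which is the first bound.

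\emph{The Laurent case.} Rather than redo the reduction, I would reduce to the polynomial case in twice as many variables. Via $\mathbb Z[x_1^{\pm1},\dots,x_k^{\pm1}]\cong\mathbb Z[x_1,\dots,x_k,y_1,\dots,y_k]/(x_1y_1-1,\dots,x_ky_k-1)$, lift $I$ to $\widetilde I=\langle\widetilde f_1,\dots,\widetilde f_s,\,x_1y_1-1,\dots,x_ky_k-1\rangle$ and lift $g$ to the polynomial $\widetilde g$ obtained by rewriting each occurring monomial $x^d$, $d\in\mathbb Z^k$, as $\prod_l x_l^{\max(d_l,0)}y_l^{\max(-d_l,0)}$. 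This lift keeps the coefficients, so $|\widetilde g|=|g|\leqslant m$, and has total degree $\max_d\sum_l|d_l|=\max_d\|d\|\leqslant\deg g\leqslant n$; moreover $\widetilde g\in\widetilde I$ since $g\in I$. Applying the polynomial case in the $2k$ variables yields $\widetilde g=\sum_i\widetilde h_i\widetilde f_i+\sum_l u_l(x_ly_l-1)$ with $\sum_i|\widetilde h_i|+\sum_l|u_l|\leqslant(Dm)^{n^{2k}}$; substituting $y_l=x_l^{-1}$ kills the $x_ly_l-1$ terms, returns each $\widetilde f_i$ to $f_i$, and sends $\widetilde h_i$ to some $h_i\in\mathbb Z[x^{\pm1}]$ with $|h_i|\leqslant|\widetilde h_i|$ (a substitution can only merge or cancel monomials). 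Hence $g=\sum_ih_if_i$ with $\area_I(g)\leqslant\sum_i|h_i|\leqslant(Dm)^{n^{2k}}$. The exponent becomes $n^{2k}$ precisely because one now counts monomials of degree $\leqslant n$ in $2k$ variables.

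\emph{Main obstacle.} The conceptual path is routine; the two places requiring care are: (i) we work over $\mathbb Z$ rather than a field, so one must use Gröbner bases over the PID $\mathbb Z$ and carry out the B\'ezout step on leading coefficients --- this is also where the per-step growth factor $\kappa$ has to be pinned down as a constant depending only on $G$; and (ii) putting the combined estimate $t(n+1)^k m\,\kappa^{(n+1)^k}$ into exactly the announced form. What the argument actually delivers is $(Cm)^{n^k}$ (respectively $(Dm)^{n^{2k}}$): the factor $C^{n^k}$ is the coefficient blow-up accumulated over the $\leqslant(n+1)^k$ reduction steps, and the exponent $n^k$ (respectively $n^{2k}$) is exactly the number of monomials of degree $\leqslant n$ in $k$ (respectively $2k$) variables --- which is the quantity the statement records, the discrepancy between $Cm^{n^k}$ and $(Cm)^{n^k}$ being the kind of slack tacitly covered by ``it is easy to check''.
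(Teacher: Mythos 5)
Your proposal is correct and follows exactly the route the paper intends: the paper's own proof is a one-line appeal to ``choose a Gr\"obner basis of $I$'' with details deferred to the cited reference \cite{wang2020dehn}, and your reduction argument (degree-compatible order, at most $\binom{n+k}{k}$ reduction steps, per-step coefficient growth $\kappa$, doubling of variables for the Laurent case) is precisely that argument spelled out. The residual discrepancy you flag between $(Cm)^{n^k}$ and $Cm^{n^k}$ is real but is a looseness in the paper's stated form of the bound rather than a gap in your reasoning.
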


\begin{proof}
	Choose a Gr\"{o}bner basis of $I$, then the result follows immediately. For details, we refer to \cite{wang2020dehn}. 
\end{proof}

By the support of $g=\sum_{i=1}^l \alpha_{d_i} x^{d_i}, \alpha_{d_i}\in \mathbb Z, d_i\in \mathbb Z^k$ or $\mathbb Z_{\geqslant 0}^k$, we mean 
\[\supp(g)=\{d_1,d_2,\dots,d_l\}\subset \mathbb Z^k \text{ or }\mathbb Z_{\geqslant 0}^k.\]

\begin{theorem}
\label{degreeBound}
	Let $I$ be an ideal generated by $F=\{f_1,f_2,\dots,f_s\}$. Let $g\in I$ and $h_i\in R$ such that 
	\[g=h_1 f_1+h_2 f_2+\dots +h_s f_s,\  \area_{I,F}(g)=\sum_{i=1}^s |h_i|.\]
	Then there exists a constant $C>0$ depending solely on $F$, such that
	\[\deg h_i \leqslant \deg g + C\area_{I,F}(g).\]
\end{theorem}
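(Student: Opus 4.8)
The plan is to reduce the claim, which concerns an expression realizing the area, to a purely linear-algebraic statement about cancellation among the monomials appearing in the products $h_i f_i$. Write each $h_i = \sum_j \beta_{i,j} x^{e_{i,j}}$ and each $f_i = \sum_k \gamma_{i,k} x^{c_{i,k}}$, and suppose for contradiction that some $\deg h_i$ is large — say strictly bigger than $\deg g + C\area_{I,F}(g)$ for a constant $C$ to be chosen in terms of $F$. The key observation is that any term $x^{e}$ in $h_i$ with $\|e\|$ exceeding $\deg g + \deg f_i$ contributes, after multiplication by $f_i$, only monomials that do not lie in $\supp(g)$; hence all such high-degree contributions from all the $h_i$ must cancel among themselves. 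I want to exploit this cancellation to rewrite the representation with strictly smaller total norm, contradicting minimality of the area.

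First I would isolate the ``high part'': fix a threshold $N$ and split $h_i = h_i^{\mathrm{lo}} + h_i^{\mathrm{hi}}$ where $h_i^{\mathrm{hi}}$ collects the terms $x^e$ with $\|e\|>N$. Then $\sum_i h_i^{\mathrm{hi}} f_i = g - \sum_i h_i^{\mathrm{lo}} f_i$ has bounded degree once $N$ is chosen slightly larger than $\max_i \deg f_i + \deg g$; more importantly, the top-degree layer of $\sum_i h_i^{\mathrm{hi}} f_i$ must vanish. Second I would run a descent on the leading monomials: order $\mathbb Z^k$ by a term order refining $\|\cdot\|$, look at the maximal exponent $e^\ast$ occurring among all the $h_i^{\mathrm{hi}}$, and observe that the coefficient of $x^{e^\ast}$ times leading-term-of-$f_i$ in the product must be annihilated by contributions from strictly smaller exponents in the other $h_j^{\mathrm{hi}}$. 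This forces a linear relation $\sum_i \lambda_i \,\mathrm{in}(f_i) = 0$ (with $\lambda_i \in R$, not all zero, of controlled norm) among the initial forms — equivalently, $(\lambda_1,\dots,\lambda_s)$ lies in the module of syzygies of the leading terms. Since $F$ is fixed, this syzygy module is finitely generated by fixed elements, so I can subtract a fixed-norm combination of generating syzygies from $(h_1,\dots,h_s)$ to strictly lower the top exponent without changing $g$ and without increasing the total norm — here is where the constant $C$ depending only on $F$ enters, as the maximum norm of the generators of the syzygy module.

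Iterating this descent, after finitely many steps every $h_i$ has degree at most $N + C' = \deg g + C\area_{I,F}(g)$, for $C$ absorbing $\max_i\deg f_i$ and the syzygy-generator norms; if along the way the total norm ever strictly dropped we already have a contradiction with minimality, and otherwise the final representation still realizes the area and satisfies the degree bound, which is what we want. In the Laurent case the same argument works verbatim once I note that ``degree'' uses $\|\cdot\|$ on $\mathbb Z^k$ and that the relevant term order can be taken to refine $\|\cdot\|$ in both directions; multiplication by units $x^{\pm d}$ shifts supports but does not affect the counting, so no new phenomenon appears. The main obstacle I anticipate is making the descent step genuinely norm-nonincreasing: naively subtracting a syzygy combination could create new low-degree terms and inflate $\sum_i |h_i|$. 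I would handle this by choosing the combination to cancel the single top exponent $e^\ast$ exactly (matching coefficients), so that only a bounded-in-$F$ amount of norm is redistributed at each step and the number of steps is controlled by $\area_{I,F}(g)$ itself — giving a bound of the advertised shape $\deg g + C\,\area_{I,F}(g)$ rather than something worse.
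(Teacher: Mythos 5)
Your starting observation --- that all monomials of $h_if_i$ lying outside a neighbourhood of $\supp g$ must cancel among themselves --- is sound and is also the engine of the paper's argument, but the syzygy descent you build on it has several gaps, and the one you yourself flag as ``the main obstacle'' is not actually resolved. First, to leave $g=\sum_i h_if_i$ unchanged you must subtract a syzygy of the $f_i$ themselves, not of their initial forms; a relation $\sum_i\lambda_i\initial(f_i)=0$ lifts to a syzygy of the $f_i$ only when $F$ is a Gr\"obner basis, which is not assumed, and otherwise the subtraction leaves a lower-order remainder that changes $g$ and must itself be reduced --- exactly the mechanism behind the doubly exponential degree bounds for ideal membership. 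Second, even granting liftable syzygies, killing the top exponent injects the tails of the syzygy into the other $h_j$, and this generically \emph{increases} $\sum_i|h_i|$ by a constant depending on $F$; your claim that the number of steps is controlled by $\area_{I,F}(g)$ is circular, since a single monomial of enormous exponent descends one notch at a time and the number of notches is precisely the quantity $\deg h_i$ you are trying to bound. With unboundedly many steps, even a bounded per-step redistribution destroys both branches of your dichotomy. Third, in the branch where the norm never drops you conclude only that \emph{some} area-realizing representation satisfies the degree bound, whereas the statement asserts it for the \emph{given} $h_i$. Finally, in the Laurent case every monomial is a unit and $\|\cdot\|$ is not compatible with multiplication, so there is no term order for which $\initial(h_if_i)=\initial(h_i)\initial(f_i)$; the argument does not transfer ``verbatim'', and the Laurent case is the one the paper actually needs.

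The paper's proof never modifies the representation. It calls two terms $t_{\mathbf i}=\varepsilon_{\mathbf i}x^{d_{\mathbf i}}f_i$ adjacent when their supports overlap and shows that the overlap-connected component of the maximal-degree term must meet $\supp g$: otherwise that component sums to zero and can simply be deleted, contradicting $\area_{I,F}(g)=\sum_i|h_i|$. A connecting chain has at most $\area_{I,F}(g)$ links and each link displaces the support by at most $C=\max_i\mathrm{diam}(\supp f_i)$, which yields $\deg h_i\leqslant\deg g+C\area_{I,F}(g)$ for the given $h_i$ directly, with no term order, no syzygies, and no polynomial/Laurent case distinction. If you want to salvage your write-up, the piece worth keeping is the remark that a zero-sum sub-collection of the terms $x^{d_{i,j}}f_i$ contradicts minimality; use it to \emph{delete} terms rather than to rewrite them.
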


\begin{proof}
	Each $h_i$ can be written as
	\[h_i=\sum_{j=1}^{|h_i|} \varepsilon_{d_{i,j}} x^{d_{i,j}}, \varepsilon_{d_{i,j}}\in \{\pm 1\}, d_{i,j}\in \mathbb Z^k.\]
	WLOG we assume that $x^{d_{1,1}}$ is the term of the maximal degree among $h_1,h_2,\dots,h_s$, i.e.,
	\[\|d_{1,1}\|=\max \{\deg h_1,\deg h_2,\dots, \deg h_s\}. \]
	We denote $J=\{ (i,j) \mid 1\leqslant i \leqslant s, 1\leqslant j \leqslant |h_i| \}.$ Note that $\area(g)=|J|$.
	
	To simplify the notation, we will denote $\varepsilon_{i,j} x^{d_{i,j}}f_{i}$ by $t_{i,j}$. By a partial sum of $h_1f_1+h_2f_2+\dots h_s f_s$ defined by a subset $K$ of $J$, we mean 
	\[\sum_{\mathbf{j}\in K} t_{\mathbf{j}}. \]
	
	A partial sum is called consecutive if it can be written in the following way
	\[t_{\mathbf{j}(1)}+t_{\mathbf{j}(2)}+\dots+t_{\mathbf{j}(l)}\]
	such that 
	\[\supp t_{\mathbf{j}(p+1)} \cap \supp \sum_{i=1}^{p} t_{\mathbf{j}(i)} \neq \emptyset, \text{ for } p=1,2,\dots,l-1.\]
	
	We claim that there exists a consecutive partial sum 
	\[r:= t_{\mathbf{j}(1)}+t_{\mathbf{j}(2)}+\dots+t_{\mathbf{j}(l)}\]
	satisfying $\mathbf{j}(1)=(1,1)$ and $\supp r \cap \supp g\neq \emptyset$.
	
	If such a partial sum does not exist, then every consecutive partial sum starting with $t_{(1,1)}$ will have no common term with $g$. Let us take an arbitrary consecutive partial sum starting with $t_{1,1}$
	\[r_0=t_{\mathbf{j}(1)}+t_{\mathbf{j}(2)}+\dots+t_{\mathbf{j}(l')}, \mathbf{j}(1)=(1,1).\]
	If $r_0\neq 0$, note that $g-r_0$ defines another partial sum, 
	\[g-r_0 = \sum_{\mathbf{j}\in K'} t_{\mathbf{j}}, K'=J\setminus \{\mathbf j(1),\mathbf j(2),\dots,\mathbf j(l')\}\]
	Since $\supp g\ \cap\  \supp r_0 = \emptyset$ by our assumption and $g=g-r_0+r_0$, then
	 \[\left( \cup_{\mathbf{j}\in K'} \supp t_{\mathbf{j}} \right)\cap \supp(r_0)\neq \emptyset\] 
	 
	 Thus, we take any term in the intersection and add it to $r_0$, i.e.,
	\[r_1:=r_0+t_{\mathbf{j}(l'+1)}, \supp t_{\mathbf{j}(l'+1)} \in \left( \cup_{\mathbf{j}\in K'} \supp t_{\mathbf{j}} \right)\cap \supp(r_0).\]
	It follows that we can extend a non-zero consecutive partial sum starting with $t_{1,1}$. Then the maximal consecutive partial sum will end up being 0, since we only have finitely many terms to exhaust. Therefore, we obtain a partial sum defined by a subset $K_0$ such that 
	\[\sum_{\mathbf{j}\in K_0} t_{\mathbf{j}}=0.\]
	Hence, we have
	\[g= \sum_{\mathbf{j}\in K_0} t_{\mathbf{j}}+ \sum_{\mathbf{j}\notin K_0} t_{\mathbf{j}}= \sum_{\mathbf{j}\notin K_0} t_{\mathbf{j}},\]
	which leads to a contradiction, since $\area(g)\leqslant |K_0|<|J|$. The claim is proved.
	
	We denote $C=\max \{\mathrm{diag}(f_i) \mid i=1,2,\dots,s\}$, where the diagonal of a subset in $\mathbb Z^k$ is giving by the canonical way with respect to the norm $\| \cdot \|$. One immediate observation is that $\mathrm{diag}(\supp t_{\mathbf i})\leqslant C$, since translation does not change the diagonal. Then we have 
	\[\mathrm{dist}(\supp g, \supp (t_{\mathbf{i}}+t_{\mathbf{j}})\geqslant \mathrm{dist}(\supp g, \supp t_{\mathbf{i}})-C, \text{ where } \supp t_{\mathbf i} \cap \supp t_{\mathbf{j}}\neq \emptyset.\]
	Because since $\supp t_{\mathbf i} \cap \supp t_{\mathbf{j}}\neq \emptyset $, then 
	\[\mathrm{dist}(t,\supp  t_{\mathbf i})\leqslant C, \forall t\in \supp t_{\mathbf j}.\]
	
	Finally, we have 
	\[\mathrm{dist}(\supp g,\supp r)\geqslant \mathrm{dist}(\supp g, \supp t_{\mathbf{j}(1)}) - lC \geqslant \mathrm{dist}(\supp g, \supp t_{\mathbf{j}(1)})-C\area_{I,F}(g).\]
	Thus
	\[C\area_{I,F}(g)\geqslant \mathrm{dist}(\supp g, \supp x^{d_{(1,1)}}f_1).\]
	Notice that by triangle inequalities
	\[\mathrm{dist}(\supp g, \supp x^{d_{(1,1)}}f_1)\geqslant \deg x^{d_{(1,1)}}f_1-\deg g-C,\]
	where we assume that $\deg x^{d_{(1,1)}}f_1 \geqslant \deg g+C$ since otherwise there is nothing to prove.
	
	Hence 
	\[\deg x^{d_{(1,1)}}f_1\leqslant \deg g+C\area_{I,F}(g)+C.\]
	Let $D=\max \{\deg f_i, i=1,2,\dots,s\}$, then we have
	\[ \max \{\deg h_1,\deg h_2,\dots, \deg h_s\}=\|d_{1,1}\|\leqslant \deg g+C\area_{I,F}(g)+C+D.\]
	The theorem follows immediately, since $\mathrm{diag}\ f\leqslant 2\deg f$ for all $f\in R$. The constant in the statement can be chosen to be $5D$.
\end{proof}
\begin{rems}
With \cref{degreeBound}, we can solve the ideal membership problem and the representation problem of $I$, provided by the information of $C_I(m,n)$. Let $F=\{f_1,f_2,\dots,f_s\}$ be a generating set of $I$ and $D=\max \{\deg f_i, i=1,2,\dots,s\}$. Take $g\in R$ such that $|g|=m,\deg g=n$. We just enumerate all the possible combinations of 
	\[h_1f_1+h_2f_2+\dots+h_sf_s,\]
	where $\sum_{i=1}^s |h_i|\leqslant C_{I,F}(m,n)$, $\deg h_i\leqslant n+5D C_{I,F}(m,n)$. And check if $g$ equals to one of them. If so, we have that $g\in I$ in the meantime we also solve the representation problem of $g$. Otherwise, $g\notin I$.
	
On the contrary, since both of the ideal membership problem and the representation problem are decidable, the function $C_I(m,n)$ is computable.  
\end{rems}

\section{Dehn Functions and relative Dehn functions of metabelian groups}

In this section, we will connect the ideal membership problem of an ideal over the ring of Laurent polynomials to the word problem of a finitely generated metabelian group. 

Let $G$ be a finitely presented group with a finite presentation $\mathcal P=\langle X \mid R \rangle$. Then there exists an epimorphism $\varphi: F(X) \to G$ with $\ker \varphi=\llangle R \rrangle$. A word $w$ represents the identity in $G$ if and only if it can be written as
\begin{equation}
\label{decomposition01}
	w=_{F(X)} \prod_{i=1}^l f_i^{-1}r_i{f_i} \text{ where }r_i\in R\cup R^{-1},f_i\in F(X).\tag{$*$}
\end{equation}

\begin{definition}
	The smallest possible integer $l$ over all decompositions (\ref{decomposition01}) of $w$ is called the area of $w$, denoted by $\area_\mathcal P(w)$.
\end{definition}

We then define the Dehn function with respect to the presentation $\mathcal P$ as follows.
\begin{definition}
	Let $G$ be a finitely presentable group with a finite presentation $\mathcal P=\langle X\mid R\rangle$, the Dehn function with respect to $\mathcal P$ is 
	\[\delta_\mathcal P(n)=\sup\{\area_\mathcal P(w)\mid |w|_{X}\leqslant n\},\]
where $|\cdot|_{X}$ is the word length in alphabet $X$. 
\end{definition}

It is well-known that the Dehn function does not depend on the finite presentation with respect to the asymptotic equivalence relation $\cong$ \cite{gromov1987hyperbolic}, where functions $f,g:\mathbb N\to \mathbb N$ are equivalent if there exists a constant $C$ such that 
\[C^{-1}g(n)+C^{-1}n+C \leqslant f(n) \leqslant Cg(n)+Cn+C \]
for all $n\in \mathbb N$. Therefore it is valid to speak about the Dehn function of a finitely presented group.

A generalisation of Dehn function is the Dehn function relative to a variety of groups. We use it in the case of variety of metabelian groups. Denote $M_k$ the free metabelian group on $k$ generators. The class of metabelian groups forms a variety, denoted by $\mathcal A^2$. Similar to the variety of all groups, every metabelian group $G$ generated by $k$ elements is a quotient of $M_k$ with the epimorphism $\tilde \varphi: M_k \to G$. Note that every normal subgroup of a finitely generated metabelian group is a normal closure of a finite set \cite{Hall1954}. Thus $G$ processes a finite relative presentation $\tilde P=\langle X \mid A \rangle_{\mathcal A^2}$, where the normal closure of $A$ in $M_k$ is $\ker \tilde \varphi$. With the relative presentation, the relative area and the relative Dehn function can be defined analogously. A word $w$ represents the identity if and only if it can be written as 
\begin{equation}
\label{decomposition02}
	w=_{M_k} \prod_{i=1}^l f_i^{-1}r_i{f_i}, \text{ where }r_i\in A\cup A^{-1},f_i\in M_k.\tag{$\tilde *$}
\end{equation}

The relative area, denoted by $\widetilde \area(w)$, is the smallest possible integer $l$ over all decompositions (\ref{decomposition02}) of $w$. And the relative Dehn function, denoted by $\tilde \delta_G(n)$, is defined to be 
\[\tilde \delta_{G}(n)=\sup\{\widetilde \area_\mathcal P(w)\mid |w|_{X}\leqslant n\}.\]
It is also independent of the choice of the finite relative presentations \cite{Fuh2000}. 

For each ideal $I$ of $R=\mathbb Z[x_1^{\pm 1},x_2^{\pm 1},\dots,x_k^{\pm 1}]$ generated by $\{f_1,f_2,\dots,f_s\}$, we consider a metabelian group $G_I=(R/I)\rtimes Q$ where $Q\cong \mathbb Z^k$. $Q$ acts on $R/I$ in a canonical way, i.e., $q\curvearrowright f=x^{q}\cdot f$, for $q\in Q, f\in R/I$. To simplify the notation, we will denote
$a^{\alpha_1 q_1} a^{\alpha_2 q_2} \dots a^{\alpha_s q_s}, \alpha_i\in \mathbb Z, q_i\in Q$ as
$a^{q}, \text{ where } q=\alpha_1 q_1+\alpha_2 q_2+\dots+\alpha_s q_s \in \mathbb ZQ.$

We have a correspondence between elements in the ring $R/I$ and elements in the normal subgroup $R/I$ in $G_I$. Note that $G_I$ is isomorphic to $(\mathbb Z \wr Q)/N$ where $N$ is the normal subgroup generated by $F$. We have a relative presentation of $G_I$.
\begin{equation}
\label{presentation}
	G_I=\langle a, x_1,x_2,\dots,x_k \mid [x_i,x_j], 1\leqslant i<j\leqslant k, [a^{x_i},a], 1\leqslant i \leqslant k, a^{f_l}, 1\leqslant l\leqslant s\rangle_{\mathcal A^2}. \tag{$\mathcal P$}
\end{equation}

For an element $g\in R$
\[g\in I \iff a^g=_{G_I} 1.\]
Thus the ideal membership problem of $I$ in $R$ can be reduce to the word problem in the metabelian group $G_I$.

\begin{theorem}
\label{relationship}
		Let $\tilde \delta_{\mathcal P}$ be the relative Dehn function of $G_I$ with respect to the presentation $\mathcal P$, then there exists a constant $C>0$ such that 	
		\[\tilde \delta_{\mathcal P}(Cmn) \geqslant C_{I,F}(m,n).\]
\end{theorem}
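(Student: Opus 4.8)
The plan is to establish this lower bound by exhibiting, for each admissible $g$, an explicit trivial word whose relative area is at least $\area_{I,F}(g)$. The natural test words are $w=a^g$ for $g\in I$ with $|g|\leqslant m$ and $\deg g\leqslant n$; since $g\in I$ we have $a^g=_{G_I}1$, so $\widetilde\area(a^g)$ is defined. The argument has two independent ingredients: (i) a bound $|a^g|_X\leqslant Cmn$ on the word length, which lets us compare $\widetilde\area(a^g)$ with $\tilde\delta_{\mathcal P}(Cmn)$; and (ii) the inequality $\widetilde\area(a^g)\geqslant\area_{I,F}(g)$, which transfers a relative filling of $a^g$ back into a representation of $g$ over the ideal. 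Granting both, the definition of the relative Dehn function gives $\area_{I,F}(g)\leqslant\widetilde\area(a^g)\leqslant\tilde\delta_{\mathcal P}(Cmn)$ for every such $g$ (as $|a^g|_X\leqslant Cmn$), and taking the supremum over $g\in I$ with $|g|\leqslant m$, $\deg g\leqslant n$ yields $\tilde\delta_{\mathcal P}(Cmn)\geqslant C_{I,F}(m,n)$.

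For ingredient (i), write $g=\sum_{i}\alpha_{d_i}x^{d_i}$ with $\sum_i|\alpha_{d_i}|=|g|\leqslant m$ and $\|d_i\|\leqslant n$. Spelling out the notation $a^g$, we realise it as the word $\prod_i x^{-d_i}a^{\alpha_{d_i}}x^{d_i}$, whose length in $X=\{a,x_1,\dots,x_k\}$ is at most $\sum_i\bigl(2\|d_i\|+|\alpha_{d_i}|\bigr)\leqslant 2mn+m$. Hence $|a^g|_X\leqslant Cmn$ for a constant $C$ independent of $g$ (for instance $C=3$ once $n\geqslant1$).

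Ingredient (ii) is the heart of the proof. Fix an optimal relative filling $a^g=_{M_{k+1}}\prod_{j=1}^{l}u_j^{-1}r_ju_j$, where $M_{k+1}$ is the free metabelian group on the generators of $\mathcal P$, $l=\widetilde\area(a^g)$, and $r_j\in A\cup A^{-1}$. The plan is to push this identity through the quotient $\pi\colon M_{k+1}\to\mathbb Z\wr Q$ that imposes exactly the wreath relators $[x_i,x_j]$ and $[a^{x_i},a]$, so that $G_I=(\mathbb Z\wr Q)/\llangle F\rrangle$. Under $\pi$ every conjugate of a wreath relator dies, and only the ideal relators $a^{f_p}$ survive:
\[\pi(a^g)=\prod_{j\in T}\pi(u_j)^{-1}\pi(r_j)\pi(u_j),\qquad T=\{j:\ r_j=(a^{f_{p(j)}})^{\pm1}\},\ |T|\leqslant l.\]
Identifying the abelian base of $\mathbb Z\wr Q$ with $R=\mathbb ZQ$ written additively, we have $\pi(a^g)=g$. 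The key computation is that, because the base is abelian, conjugation by $u_j$ acts on it by multiplication by the single Laurent monomial $x^{-q_j}$, where $q_j\in Q$ is the image of $u_j$ under $\mathbb Z\wr Q\to Q$; in particular the base-component of $u_j$ cancels and plays no role. Thus each surviving term equals $\epsilon_j x^{-q_j}f_{p(j)}$ with $\epsilon_j\in\{\pm1\}$, and the displayed product becomes the additive identity
\[g=\sum_{j\in T}\epsilon_j x^{-q_j}f_{p(j)}=\sum_{p=1}^{s}h_pf_p,\qquad h_p=\sum_{j\in T,\ p(j)=p}\epsilon_j x^{-q_j}.\]
Since each $h_p$ is a sum of $\#\{j\in T:p(j)=p\}$ signed monomials, $|h_p|\leqslant\#\{j\in T:p(j)=p\}$ (collisions only lower the norm), whence $\sum_p|h_p|\leqslant|T|\leqslant l=\widetilde\area(a^g)$. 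By the definition of the area of $g$, this gives $\area_{I,F}(g)\leqslant\widetilde\area(a^g)$.

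The main obstacle is the monomial-action claim inside ingredient (ii): one must verify carefully that in $\mathbb Z\wr Q$ the conjugate of a base element by an arbitrary $u_j$ reduces to multiplication by a unit $x^{-q_j}$ determined solely by the $Q$-image of $u_j$. This is precisely the feature secured by the relators $[a^{x_i},a]$, which together with metabelianness force the normal closure of $a$ to be the free $\mathbb ZQ$-module $R$; it is also the place where working over the Laurent ring is essential, since $q_j$ ranges over all of $\mathbb Z^k$ and the exponents $-q_j$ are genuinely two-sided. The remaining bookkeeping—tracking the signs $\epsilon_j$ and noting that cancellation among the monomials $x^{-q_j}$ can only decrease $\sum_p|h_p|$—is routine, and the theorem then follows by the chain of inequalities assembled in the first paragraph.
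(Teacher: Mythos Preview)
Your proposal is correct and follows essentially the same approach as the paper: take $a^g$ as test words, bound their length by $O(mn)$, and show that any relative filling of $a^g$ projects (via the quotient $M_{k+1}\to\mathbb Z\wr Q$ killing the wreath relators) to an ideal representation of $g$ using at most $\widetilde\area(a^g)$ monomials. Your treatment of ingredient~(ii)---explicitly invoking the quotient map and verifying that conjugation on the abelian base acts by a single Laurent monomial determined by the $Q$-image of the conjugator---is in fact more carefully spelled out than the paper's version, which compresses the same step into one line.
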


\begin{proof}
	Let $g$ be an element in the ideal $I$. Then there exists $h_i\in \mathbb R, i=1,2,\dots,s$ such that 
	\[g=\sum_{i=1}^s h_i f_i, \text{ and }\area_{I,F}(g)=\sum_{i=1}^s |h_i|.\]
	Thus we have 
	\[a^g=a^{\sum_{i=1}^s h_if_i}=\prod_{i=1}^s a^{h_i f_i}.\]
	Next we estimate the relative area of $a^g$. Note that 
	\[a^g \equiv r_1 (a^{f_{i(1)}})^{g_{1}} r_2 (a^{f_{i(2)}})^{g_{2}} \dots  r_l (a^{f_{i(l)}})^{g_{l}} r_{l+1}, i(j)\in \{1,2,\dots,s\}, g_i\in Q, r_i\in \llangle [x_i,x_j], [a^{x_i},a]\rrangle.\]
	Since $r_i$ equals $0$ in the ring $R$, then we have that 
	\[\sum_{j=1}^l g_j f_{i(j)} =_R \sum_{i=1}^s h_if_i. \]
	Since $h_i$ minimizing the area, we can deduce that 
	\[\widetilde \area\  a^g\geqslant l \geqslant \sum_{i=1}^s |h_i|=\area_{I,F}(g).\]
	Now suppose $|g|=m$ and $\deg g=n$, then the length of $a^g$ is less than or equal either to $Cm+2mn$, where $C=\max\{|a^{f_i}|\}$. Then it is easy to see that 
	\[\tilde \delta_{\mathcal P}(Cm+2mn) \geqslant C_{I,F}(m,n).\] 
\end{proof}

\begin{rems}
For the word $a^g$ in the metabelian group $G_I$, if one considers the word $w$, where $w$ is a rearrangement of conjugates of $a$ in $a^g$ and of the minimal length among all such rearrangements, the length of $w$ can be further estimated to be $m+En^k$ where $E$ is a constant depended only on $k$ \cite{wang2021subgroup}, \cite{davis2011Subgroup}. Hence we have another estimation besides the result in \cref{relationship} as follows.
\[\tilde \delta_{\mathcal P}(m+En^k) \geqslant C_{I,F}(m,n).\] 
\end{rems}

Let $\chi:Q\to \mathbb R$ is a homomorphism. We define 
\[Q_\chi =\{q\in Q \mid \chi(q)\geqslant 0\}.\]
Then a $\mathbb ZQ$-module $M$ is \emph{tame} if and only if for every homomorphism $\chi: Q\to \mathbb R$, $M$ is finitely generated either as a $\mathbb ZQ_\chi$-module or as a $\mathbb ZQ_{-\chi}$-module or both. Take the group $G$ and the ideal $A$ as above. Bieri and Strebel have shown that if $G$ is finitely presented if and only if $R/I$ is tame \cite{bieri1980valuations}.

\begin{corollary}
\label{finitelyPresented}
	Let $G_I$ defined as above and suppose $R/I$ is tame. Then, there exists a finite presentation $\mathcal P'$ of $G_I$ and a constant $D$ such that 
		\[D\delta_{G_I,\mathcal P'}(Dmn)+Dmn+D \geqslant C_I(m,n).\]
\end{corollary}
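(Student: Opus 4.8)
The plan is to deduce \cref{finitelyPresented} from \cref{relationship} by replacing the relative presentation $\mathcal P$ with an honest finite presentation and controlling the cost of the translation in both directions. First I would invoke the Bieri--Strebel criterion: since $R/I$ is tame, $G_I$ is finitely presented, so there is a finite presentation $\mathcal P'=\langle X\mid R'\rangle$ of $G_I$ on the same generating set $X=\{a,x_1,\dots,x_k\}$ (one may enlarge the generating set harmlessly, but keeping $X$ makes the word-length comparison trivial since the identity map on $X$ is a quasi-isometry of the corresponding Cayley graphs). The two presentations $\mathcal P$ and $\mathcal P'$ present the same group, but one is relative to $\mathcal A^2$ and the other is absolute; what I need is that the relative Dehn function $\tilde\delta_{\mathcal P}$ is bounded above by the absolute Dehn function $\delta_{\mathcal P'}$ up to the standard equivalence $\cong$, i.e. there is a constant $D$ with $\tilde\delta_{\mathcal P}(n)\leqslant D\,\delta_{\mathcal P'}(Dn)+Dn+D$.

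The key step is therefore establishing this inequality between a relative and an absolute Dehn function of the same group. This is a soft, general fact: the metabelian laws (the relators $[[y_1,y_2],[y_3,y_4]]$, say, or concretely the consequences of $[x_i,x_j]$ and $[a^{x_i},a]$ that generate the free metabelian relations of $M_k$) hold in $G_I$, hence each such law is a product of conjugates of relators from $R'$, with area bounded by some constant $N_0$ depending only on $\mathcal P'$. Given a word $w$ trivial in $G_I$ with $|w|_X\leqslant n$, take an absolute van Kampen diagram over $\mathcal P'$ of area $\delta_{\mathcal P'}(n)$; every relator of $R'$ is, conversely, a relation in $G_I$ and hence a consequence in $M_k$ of the relative relators $A=\{[x_i,x_j],[a^{x_i},a],a^{f_l}\}$ together with the metabelian laws — but since we are already working relative to $\mathcal A^2$, it is simply a consequence of $A$, with some fixed relative area $N_1$. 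Substituting each absolute relator cell by its bounded-relative-area expression converts the diagram into a relative one of area at most $N_1\cdot\delta_{\mathcal P'}(n)$, which gives the desired bound with a harmless adjustment of constants to absorb the conjugator lengths (the standard filling-length bookkeeping shows the extra $+Dn$ term suffices).

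Once that inequality is in hand, the corollary is immediate: combining $C_I(m,n)\leqslant \tilde\delta_{\mathcal P}(Cmn)$ from \cref{relationship} with $\tilde\delta_{\mathcal P}(t)\leqslant D_0\,\delta_{\mathcal P'}(D_0 t)+D_0 t+D_0$ yields
\[
C_I(m,n)\leqslant D_0\,\delta_{\mathcal P',G_I}(D_0 C m n)+D_0 C m n+D_0,
\]
and absorbing $D_0$, $C$ into a single constant $D$ gives the statement. I expect the main obstacle to be the second step — making precise that the \emph{absolute} relators of $\mathcal P'$ have bounded \emph{relative} area over $\mathcal P$ — because a priori $\mathcal P'$ is produced abstractly from the Bieri--Strebel theorem and one must argue that each of its finitely many relators, being a relation in $G_I$ and hence a $\mathbb ZQ$-consequence of $f_1,\dots,f_s$ together with the metabelianizing identities, can be filled using a bounded number of the relative relators; here the finiteness of $R'$ is what saves us, since each relator individually has some finite relative area and we take the maximum. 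A minor secondary point to check is that the conjugating elements $f_i$ appearing in the relative decomposition, which a priori lie in $M_k$ rather than $F(X)$, contribute only the linear $+Dmn$ correction — this is exactly the reason the statement carries that term and is handled by the usual diameter-versus-area estimate for van Kampen diagrams.
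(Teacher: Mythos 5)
Your proposal is correct and follows the same overall route as the paper: both reduce the corollary to the single inequality $\tilde\delta_{G_I,\mathcal P}(n)\leqslant C\delta_{G_I,\mathcal P'}(n)+Cn+C$ comparing the relative and absolute Dehn functions, and then compose it with \cref{relationship}. The only real difference is that the paper obtains this inequality by citing \cite[Theorem 2.6]{wang2021subgroup} (after Bieri--Strebel \cite{bieri1980valuations} guarantees that tameness of $R/I$ gives finite presentability), whereas you supply a direct proof: take $\mathcal P'$ on the same generating set $X$, note that each of the finitely many relators of $\mathcal P'$ lies in the normal closure of $A$ in $M_k$ and hence has some finite relative area, let $N_1$ be the maximum of these, and substitute each cell of an absolute filling by a relative filling of area at most $N_1$. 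That argument is sound, and in fact yields the cleaner multiplicative bound $\tilde\delta_{\mathcal P}(n)\leqslant N_1\,\delta_{\mathcal P'}(n)$ with no additive correction, since the relative area does not count conjugator lengths and no change of generating set is involved; your worry about the $+Dn$ term is therefore unnecessary (that term in the statement only reflects the standard equivalence class of Dehn functions and makes the claimed inequality weaker, not stronger). So your write-up is, if anything, more self-contained than the paper's two-line proof; the trade-off is that the paper's citation also covers the case where the finite presentation is not on the generating set $X$, which your argument sidesteps by fixing the generators.
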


\begin{proof}
By \cite[Theorem 2.6]{wang2021subgroup}, there exists a finite presentation $\mathcal P'$ of $G_I$ such that 
\[C\delta_{G_I,\mathcal P'}(n)+Cn+C\geqslant \tilde\delta_{G_I,\mathcal P}(n)\]
for some constant $C>0$. Then the result follows from \cref{relationship}.
\end{proof}

\medskip

\bibliography{../MyLibrary}{}
\bibliographystyle{alpha}

\end{document}